\newtheorem{dfn} [subsection]{Definition}
\newtheorem{obs} [subsection]{Remark}
\newtheorem{exm} [subsection]{Example}
\newtheorem{prop}[subsection]{Proposition}
\newtheorem{teor}[subsection]{Theorem}
\def\Sind{\operatorname{SInd}}
\def\Gal{\operatorname{Gal}}
\def\Irr{\operatorname{Irr}}
\def\SCl{\operatorname{SCl}}
\def\Sup{\operatorname{Sup}}
\def\Reg{\operatorname{Reg}}
\def\ord{\operatorname{ord}}
\numberwithin{equation}{section}
\title{On arithmetic Heilbronn supercharacters}
\author{Mircea Cimpoea\c s$^1$}
\date{}
\begin{document}

\maketitle
\footnotetext[1]{ \emph{Mircea Cimpoea\c s}, National University of Science and Technology Politehnica Bucharest, Faculty of
Applied Sciences, 
Bucharest, 060042, Romania and Simion Stoilow Institute of Mathematics, Research unit 5, P.O.Box 1-764,
Bucharest 014700, Romania, E-mail: mircea.cimpoeas@upb.ro,\;mircea.cimpoeas@imar.ro }

\begin{abstract}
In this note, we introduce arithmetic Heilbronn supercharacters that generalize the notions of
arithmetic Heilbronn characters and Heilbronn supercharacters and discuss several properties of them.
 
\textbf{Keywords:} Heilbronn character; supercharacter theory; L-functions.

\textbf{MSC2020: 11R42; 20C15} 
\end{abstract}

\setcounter{section}{0}
\section*{Introduction}

Let $\mathbb Q \subset K$ be a number field. In order to study the zeros of the Dedekind zeta function $\zeta_K(s)$, Heilbronn
\cite{heilb} introduced what are now called Heilbronn characters, which allowed him to give a simple proof of the famous
Aramata and Brauer Theorem \cite{arama,brauer}, that is $\zeta_K(s)/\zeta(s)$ is entire.

More generally, if $K\subset L$ is a number field externsion, the problem if $\zeta_L(s)/\zeta_K(s)$ is entire is open, and it
would be a consequence of the Artin conjecture for L-functions \cite{artin2}. This connection
suggests that the method of Heilbronn is useful in studying $L$-functions and, indeed, it was used by several authors; see
for instance \cite{murty}.

In \cite{heil}, P.-J. Wong introduced the so called arithmetic Heilbronn characters which generalize the classical
Heilbronn characters and, at the same time, catch almost all properties of them. The aim of this note is to further generalize
the arithmetic Heilbronn characters, in the framework of the supercharacter theory, introduced by Diaconis and Isaacs \cite{isaacs}.

In Definition \ref{defi} we introduce the notion of arithmetic Heilbronn supercharacters, which generalizes both the arithmetic
Heilbronn characters and the Heilbronn supercharaters; see \cite[Section 4.1]{pong}. Using the supercharacter theoretic
formalism, we prove several generalizations of some classical results: Artin-Takagi decomposition (Theorem \ref{t1}), 
Heilbronn-Stark Lemma (Theorem \ref{t2}) and Uchida-van der Waall Theorem (Theorem \ref{t3}).

\pagebreak

\section{Preliminaries}

\begin{dfn}
Let $G$ be a finite group. Let $\mathcal K$ be a partition of $G$ and let $\mathcal X$ be a partition of $\Irr(G)$. 
The ordered pair $C:=(\mathcal X,\mathcal K)$
is a \emph{supercharacter theory} if:
\begin{enumerate}
\item $\{1\}\in\mathcal K$,
\item $|\mathcal X|=|\mathcal K|$, and
\item for each $X\in\mathcal X$, the character $\sigma_X=\sum_{\psi\in X}\psi(1)\psi$ is constant on each $K\in\mathcal K$.
\end{enumerate}
The characters $\sigma_X$ are called \emph{supercharacters}, and the elements $K$ in $\mathcal K$ are called \emph{superclasses}.
We denote $\Sup(G)$ the set of supercharacter theories of $G$.
\end{dfn}

Diaconis and Isaacs showed their theory enjoys properties similar to the classical character theory.
For example, every superclass is a union of conjugacy classes in $G$; see \cite[Theorem 2.2]{isaacs}.
The irreducible characters and conjugacy classes of $G$ give a supercharacter 
theory of $G$, which will be referred to as the \emph{classical theory} of $G$.

Also, as noted in \cite{isaacs}, every group $G$ admits
a non-classical theory with only two supercharacters $1_G$ and $\Reg(G)-1_G$ and
superclasses $\{1\}$ and $G\setminus\{1\}$, where $1_G$ denotes the trivial character of $G$
and $$\Reg(G) = \sum_{\chi\in\Irr(G)}\chi(1)\chi$$ is the regular character of $G$. 
This theory will be called the \emph{maximal theory} of $G$.



Let $C_G\in\Sup(G)$ be a supercharacter theory of $G$, $C_G=(\mathcal X_G,\mathcal K_G)$. Let $g\in G$. 
We denote by $\SCl_G(g)$, the superclass of $G$ which contain $g$.

\begin{dfn}(\cite[Definition 2.7]{pong})
Let $G$ be a finite group and $H$ a subgroup of $G$. Let $C_G\in\Sup(G)$ be a supercharacter theory of $G$ and $C_H\in\Sup(H)$
a supercharacter theory of $H$. We say that $C_G$ and $C_H$ are compatible if for any $h\in H$, we have
$$\SCl_H(h)\subseteq\SCl_G(h).$$
Moreover, if $C_H$ and $C_G$ are compatible and $\Phi:H\to\mathbb C$ is a superclass function of $H$, i.e. a function
constant on superclasses of $H$, then the superinduction $\Phi^G:G\to\mathbb C$ is defined by
$$\Sind_H^G \Phi^G(g)=\frac{|G|}{|H|\cdot|\SCl_G(g)|}\sum_{x\in \SCl_G(g)}\Phi^0(x),$$
where $\Phi^0(x)$ denotes $\Phi(x)$ if $x\in H$ and zero otherwise.
\end{dfn}

\begin{obs}\rm
Let $H_1\subset H_2\subset G$ be a chain of subgroups of $G$ and let $C_{H_1},C_{H_2}$ and $C_G$ be supercharacter theories of $H_1$, $H_2$
and, respectively, $G$. If $C_{H_1}$ and $C_{H_2}$ are compatible and $C_{H_2}$ and $C_G$ are compatible then $C_{H_1}$ and $C_G$ are also
compatible.
\end{obs}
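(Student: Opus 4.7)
The plan is to prove compatibility of $C_{H_1}$ and $C_G$ directly from the definition, by chaining the two given compatibilities. Concretely, the task is: given any $h \in H_1$, show that $\SCl_{H_1}(h) \subseteq \SCl_G(h)$.

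First, I would fix an arbitrary $h \in H_1$. Since $H_1 \subset H_2$, we have $h \in H_2$ as well, so both $\SCl_{H_2}(h)$ and $\SCl_G(h)$ are defined. Applying compatibility of $C_{H_1}$ and $C_{H_2}$ at $h$ yields
\[
\SCl_{H_1}(h) \subseteq \SCl_{H_2}(h).
\]
Applying compatibility of $C_{H_2}$ and $C_G$ at the same element $h \in H_2$ yields
\[
\SCl_{H_2}(h) \subseteq \SCl_G(h).
\]
Combining these two inclusions by transitivity gives $\SCl_{H_1}(h) \subseteq \SCl_G(h)$, which is exactly the compatibility of $C_{H_1}$ and $C_G$.

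There is really no obstacle here: the statement is a formal transitivity property of the containment relation on superclasses, and the only thing to check is that $h \in H_1$ ensures $h$ sits in $H_2$ and $G$ so that the relevant superclasses exist and the hypotheses can be applied to it. I do not anticipate needing any further structural fact about supercharacter theories (for instance, that superclasses are unions of conjugacy classes) for this particular remark.
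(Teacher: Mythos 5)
Your proof is correct and is exactly the transitivity-of-inclusion argument the paper has in mind; the paper itself states this as a remark without proof precisely because the argument is this immediate. No further structural facts about supercharacter theories are needed, as you correctly observe.
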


We recall the following result, see \cite[Proposition 2.14]{pong}:

\begin{prop} (Super Frobenius Reciprocity)

Let $G$ be a finite group and $H$ a subgroup of $G$. Let $C_G\in\Sup(G)$ and $C_H\in\Sup(H)$ such that $C_G$ and $C_H$ are compatible.
For all superclass functions $\Phi$ on $H$ and all superclass functions $\theta$ on $G$,
$$ \langle \Sind_H^G\Phi^G,\theta \rangle = \langle \Phi,\theta|_H \rangle,$$
where $\theta|_H$ is the restriction of $\theta$ from $G$ to $H$.
\end{prop}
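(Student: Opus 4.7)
The plan is to expand the left-hand side by the definitions of the inner product and the superinduction, then exploit that both $\theta$ and $\Sind_H^G\Phi^G$ are constant on the superclasses of $G$ to collapse the double sum into a single sum, and finally use that $\Phi^0$ is supported on $H$ to recognise the result as $\langle\Phi,\theta|_H\rangle_H$. A preliminary point worth recording is that the right-hand side is well defined: the compatibility hypothesis $\SCl_H(h)\subseteq \SCl_G(h)$ forces any function that is constant on $G$-superclasses to be constant on $H$-superclasses, so $\theta|_H$ is indeed a superclass function on $H$.

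For the main computation I would substitute the definition of superinduction into
$$\langle \Sind_H^G\Phi^G,\theta\rangle_G = \frac{1}{|G|}\sum_{g\in G}\Sind_H^G\Phi^G(g)\,\overline{\theta(g)}$$
to obtain
$$\langle \Sind_H^G\Phi^G,\theta\rangle_G = \frac{1}{|H|}\sum_{g\in G}\frac{1}{|\SCl_G(g)|}\sum_{x\in\SCl_G(g)}\Phi^0(x)\,\overline{\theta(g)}.$$
The outer sum should then be regrouped by superclass $K\in\mathcal K_G$. For each $g\in K$ one has $\SCl_G(g)=K$, so $|\SCl_G(g)|=|K|$, and both $\theta(g)=\theta(K)$ and $\sum_{x\in\SCl_G(g)}\Phi^0(x)=\sum_{x\in K}\Phi^0(x)$ depend only on $K$. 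Summing over $g\in K$ produces a factor of $|K|$ that cancels $1/|K|$, yielding
$$\langle \Sind_H^G\Phi^G,\theta\rangle_G = \frac{1}{|H|}\sum_{K\in\mathcal K_G}\sum_{x\in K}\Phi^0(x)\,\overline{\theta(x)} = \frac{1}{|H|}\sum_{x\in G}\Phi^0(x)\,\overline{\theta(x)}.$$

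To finish, I would use that $\Phi^0$ vanishes off $H$ and agrees with $\Phi$ on $H$, so the last sum equals $\frac{1}{|H|}\sum_{x\in H}\Phi(x)\,\overline{\theta|_H(x)} = \langle\Phi,\theta|_H\rangle_H$, as required. The entire argument is a formal manipulation of sums, so I do not expect a genuine obstacle; the only thing to be careful about is the bookkeeping in the regrouping step, in particular making explicit why the inner expression $\frac{1}{|\SCl_G(g)|}\sum_{x\in\SCl_G(g)}\Phi^0(x)\,\overline{\theta(g)}$ is constant on the $G$-superclass containing $g$ and how the factor $|\SCl_G(g)|$ cancels when the outer sum is reorganised by superclass. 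Beyond the definition of compatibility, no further input is needed.
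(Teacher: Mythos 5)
Your computation is correct, and I have no objection to any step: the substitution of the definition of $\Sind_H^G$, the regrouping by $G$-superclasses with the cancellation of $|K|$, and the final restriction to the support of $\Phi^0$ all go through exactly as you describe, and your preliminary remark that compatibility is what makes $\theta|_H$ a superclass function on $H$ is the right thing to record. Note that the paper itself gives no proof of this proposition --- it is quoted from Wong's thesis (\cite[Proposition 2.14]{pong}) --- so there is nothing in-text to compare against; your argument is the standard one and would serve as a complete proof.
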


As it was noted in \cite{pong}, the superinduction is unique, in the sense that it satisfies the Super Frobenius Reciprocity. More precisely, if
$\Phi \to \Phi^{(G)}$ is another arbitrary map sending superclass functions of $H$ to superclass functions of $G$ such that
$$ \langle \Phi^{(G)}, \theta \rangle  = \langle \Phi, \theta|_H \rangle, $$ 
for any super class function $\theta$ of $G$, it follows that $\Phi^{(G)}=\Sind_H^G\Phi^G$.

\section{Main results}

\begin{dfn}
Let $G$ be a finite group. Let $\mathcal C:=\{C_H\in \Sup(H)\;:\;H\leqslant G\}$ be a family of supercharacter theories on the all
subgroups of $G$. We say that $\mathcal C$ is compatible if, for any subgroups $H_1\subset H_2$ of $G$, $C_{H_1}$ and $C_{H_2}$
are compatible.
\end{dfn}

We introduce the following generalization of \cite[Definition 3.1]{heil}, in the framework of supercharacter theory:

\begin{dfn}\label{defi}
Let $G$ be a finite group and $\mathcal C$ be a compatible family of supercharacter theories on the subgroups of $G$.
Let $$I(G,\mathcal C)=\{(H,\sigma)\;:\;\sigma\text{ a supercharacter of }H\}.$$
Suppose that there is a set of integers $\{n(H, \sigma)\;:\; (H,\sigma)\in I(G,\mathcal C)\}$
satisfying the following three properties:
\begin{enumerate}
\item[ACH1:] $n(H,\sigma_1+\sigma_2)=n(H,\sigma_1)+n(H,\sigma_2)$ for any subgroup $H$ of $G$ and any
            supercharacters $\sigma_1$ and $\sigma_2$ of $H$.
\item[ACH2:] $n(G,\Sind_H^G\sigma^G)=n(H,\sigma)$ for any subgroup $H$ of $G$ and any supercharacter $\sigma$ of $H$.
\item[ACH3:] $n(H,\sigma)\geq 0$ for any supercharacter $\sigma$ of a subgroup $H$ of $G$ with linear constituents, that is $\sigma=\lambda_1+\cdots+\lambda_m$,
            where $\lambda_i$'s are linear characters of $H$.
\end{enumerate}
Then the arithmetic Heilbronn supercharacter of a subgroup $H$ of $G$ associated to $n(H, \sigma)$'s is
$$\Theta_H:=\sum_{X\in \mathcal X_H}\frac{n(H,\sigma_X)}{\sigma_X(1)}\sigma_X,$$
where $C_H=(\mathcal X_H,\mathcal K_H)$ and $\sigma_X=\sum_{\chi\in X}\chi(1)\chi$.
\end{dfn}

\begin{prop}
With the above notations, we have that
$$ \Theta_H=\sum_{X\in \mathcal X_H} \frac{n(G,\Sind_H^G\sigma_X^G)}{\sigma_X(1)}\sigma_X.$$
\end{prop}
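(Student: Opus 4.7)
The statement is essentially a direct reformulation of the definition via axiom ACH2. My plan is simply to observe that each $\sigma_X$, for $X\in \mathcal X_H$, is by construction a supercharacter of the subgroup $H$ in the theory $C_H=(\mathcal X_H,\mathcal K_H)$. Since $\mathcal C$ is a compatible family, $C_H$ is compatible with $C_G$, so the superinduction $\Sind_H^G\sigma_X^G$ is defined and is a superclass function (in fact a non-negative integer combination of supercharacters of $G$, by the super Frobenius reciprocity discussion preceding the statement).

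Having set up this applicability, the key step is to invoke axiom ACH2 with the supercharacter $\sigma=\sigma_X$ for each $X\in\mathcal X_H$. That axiom yields the coefficient-wise identity
$$ n(H,\sigma_X) \;=\; n(G,\Sind_H^G\sigma_X^G). $$
Substituting this equality into the defining expression
$$ \Theta_H \;=\; \sum_{X\in \mathcal X_H}\frac{n(H,\sigma_X)}{\sigma_X(1)}\,\sigma_X $$
immediately produces the desired formula.

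There is no real obstacle here: additivity ACH1 is not needed because we are not decomposing $\sigma_X$ further, and non-negativity ACH3 is irrelevant to the identity itself. The only thing to verify carefully is that $\sigma_X$ genuinely qualifies as a supercharacter of $H$ so that ACH2 applies verbatim, which is immediate from the definition of a supercharacter theory given at the start of the Preliminaries section.
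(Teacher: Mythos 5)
Your argument is correct and is exactly the paper's own proof: the paper simply remarks that the identity follows immediately from ACH2, which is the substitution $n(H,\sigma_X)=n(G,\Sind_H^G\sigma_X^G)$ you carry out. Your additional remarks on compatibility and the applicability of ACH2 are sound but add nothing beyond the paper's one-line justification.
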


\begin{proof}
It follows immediately from $ACH2$.
\end{proof}

\begin{exm}\rm
Let $K/\mathbb Q$ be a Galois extension with Galois group $G:=\Gal(K/\mathbb Q)$. Let $\mathcal C$ be a compatible family of supercharacter theories on the subgroups of $G$.
For any subgroup $H$ of $G$ and any supercharacter $\sigma$ of $H$, we define:
$$n(H,\sigma):=\ord_{s=s_0}L(s,\sigma,K/K^H),$$
where $L(s,\sigma,K/K^H)$ is the $L$-Artin function associated to the extension $K^H\subset K$ and $s_0\in\mathbb C\setminus\{1\}$ is a fixed point.

Then the integers $n(H,\sigma)$'s satisfy the conditions $ACH1$, $ACH2$ and $ACH3$.
\end{exm}

\begin{teor}(Artin-Takagi decomposition)\label{t1}
We have that:
$$n(G,\Reg(G))=\sum_{X\in\mathcal X_G}n(G,\sigma_X)=\sum_{X\in\mathcal X_G} \frac{n(G,\sigma_X)}{\sigma_X(1)} \langle \sigma_X,\sigma_X \rangle .$$
\end{teor}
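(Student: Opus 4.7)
The plan is to establish the two equalities separately, using the additivity axiom ACH1 for the first and a direct inner-product computation for the second.

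For the first equality, I would start from the decomposition of the regular character in terms of the supercharacters. Since $\Irr(G) = \bigsqcup_{X\in\mathcal X_G} X$ is a partition, we can write
$$\Reg(G) = \sum_{\chi\in\Irr(G)}\chi(1)\chi = \sum_{X\in\mathcal X_G}\sum_{\chi\in X}\chi(1)\chi = \sum_{X\in\mathcal X_G}\sigma_X.$$
Each $\sigma_X$ is a supercharacter of $G$ (for the theory $C_G$), so the additivity axiom ACH1 applies iteratively and yields
$$n(G,\Reg(G))=\sum_{X\in\mathcal X_G}n(G,\sigma_X),$$
which is the first equality.

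For the second equality, I would show that $\langle\sigma_X,\sigma_X\rangle=\sigma_X(1)$ for every $X\in\mathcal X_G$. This is a short orthonormality computation: since $\Irr(G)$ is an orthonormal basis of the space of class functions,
$$\langle\sigma_X,\sigma_X\rangle=\sum_{\chi,\psi\in X}\chi(1)\psi(1)\langle\chi,\psi\rangle=\sum_{\chi\in X}\chi(1)^2=\sigma_X(1).$$
Substituting this into the right-hand side of the claimed identity turns each summand $\tfrac{n(G,\sigma_X)}{\sigma_X(1)}\langle\sigma_X,\sigma_X\rangle$ into $n(G,\sigma_X)$, which matches the middle expression. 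Chaining the two steps gives the full identity.

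There is essentially no serious obstacle here: the only inputs are axiom ACH1, the standard decomposition of $\Reg(G)$, and the orthonormality of irreducible characters. The only point that requires a moment's care is verifying that $\Reg(G)$ is genuinely written as a sum of supercharacters for the theory $C_G$ (rather than as a sum of irreducible characters), so that ACH1 can be invoked; this is exactly what the partition structure of $\mathcal X_G$ provides.
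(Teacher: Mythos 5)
Your proposal is correct and follows exactly the same route as the paper's proof: decompose $\Reg(G)=\sum_{X\in\mathcal X_G}\sigma_X$ via the partition $\mathcal X_G$ of $\Irr(G)$, apply ACH1, and use the identity $\langle\sigma_X,\sigma_X\rangle=\sigma_X(1)$ coming from orthonormality of irreducible characters. You simply spell out the orthonormality computation that the paper calls ``obvious.''
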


\begin{proof}
Since $\Reg(G)=\sum_{\chi\in\Irr(G)}\chi(1)\chi=\sum_{X\in\mathcal X_G}\sigma_X$, the formula follows from $AHC1$ and the obvious identity
$\langle \sigma_X,\sigma_X \rangle = \sigma_X(1)$.
\end{proof}

\begin{teor}(Heilbronn-Stark Lemma)\label{t2}

For every subgroup $H$ of $G$, one has $\Theta_G|_H=\Theta_H$.
\end{teor}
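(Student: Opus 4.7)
The strategy is to verify that the two superclass functions $\Theta_G|_H$ and $\Theta_H$ of $H$ have identical inner products with every element of the orthogonal supercharacter basis of $H$. First I would observe that $\Theta_G|_H$ is indeed a superclass function of $H$: by compatibility of $C_G$ with $C_H$, every superclass of $H$ sits inside a superclass of $G$, so any function constant on superclasses of $G$ restricts to one constant on superclasses of $H$. Since the supercharacters of $H$ satisfy the orthogonality relation $\langle \sigma_Y,\sigma_{Y'}\rangle_H=\delta_{YY'}\sigma_Y(1)$, the identity $\Theta_G|_H=\Theta_H$ is equivalent to $\langle \Theta_G|_H,\sigma_Y\rangle_H=\langle \Theta_H,\sigma_Y\rangle_H$ for every $Y\in\mathcal X_H$.

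The right hand side is immediate from the definition of $\Theta_H$ and orthogonality: it equals $n(H,\sigma_Y)$. For the left hand side I would invoke Super Frobenius Reciprocity (in the form $\langle \theta,\Sind_H^G\Phi^G\rangle_G=\langle \theta|_H,\Phi\rangle_H$, obtained from the stated form by conjugation) to get
$$\langle \Theta_G|_H,\sigma_Y\rangle_H=\langle \Theta_G,\Sind_H^G\sigma_Y^G\rangle_G.$$
Now I would expand $\Theta_G$ in the supercharacter basis of $G$ and decompose $\Sind_H^G\sigma_Y^G=\sum_{X\in\mathcal X_G}b_{YX}\sigma_X$. Orthogonality on $G$ collapses the resulting double sum to $\sum_{X\in\mathcal X_G}b_{YX}\,n(G,\sigma_X)$; by additivity of $n$ (property $ACH1$, extended linearly) this equals $n(G,\Sind_H^G\sigma_Y^G)$, and then $ACH2$ reduces it to $n(H,\sigma_Y)$, matching the right hand side.

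The one delicate point is the penultimate step: the coefficients $b_{YX}$ in the supercharacter decomposition of an induced supercharacter are a priori only rational numbers, whereas $ACH1$ is stated for sums of actual (i.e.\ effective) supercharacters. To be allowed to pull the sum inside $n$, one must tacitly regard $n$ as a $\mathbb Z$-linear (or $\mathbb Q$-linear) functional on the virtual supercharacter space of each subgroup; this extension is already implicit in the formulation of $ACH2$ as soon as one wants $n(G,\Sind_H^G\sigma^G)$ to be defined for every supercharacter $\sigma$ of $H$, and it is the standard convention in Wong's arithmetic Heilbronn framework. Once this linear extension is accepted, the argument is the direct supercharacter-theoretic analogue of the classical Heilbronn--Stark lemma and requires no further input.
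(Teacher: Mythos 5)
Your proof is correct and follows essentially the same route as the paper's: both rest on Super Frobenius Reciprocity, orthogonality of the supercharacters, and properties ACH1--ACH2, with your inner-product formulation being merely the dual of the paper's direct expansion of $\sigma_X|_H$ in the supercharacter basis of $H$. The linearity caveat you flag about the rational coefficients $b_{YX}$ is a genuine subtlety, but it applies equally to the paper's own argument, which likewise feeds a rational combination of supercharacters into $n(G,\cdot)$ when invoking ACH1.
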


\begin{proof}
By Super Frobenius Reciprocity we have
\begin{align*}
& \Theta_G|_H= \sum_{X\in \mathcal X_G} \frac{n(G,\sigma_X)}{\sigma_X(1)}\sigma_X|_H = 
\sum_{X\in \mathcal X_G} \frac{n(G,\sigma_X)}{\sigma_X(1)}  \sum_{Y\in\mathcal X_H} \langle \sigma_X|_H,\sigma_Y \rangle\sigma_Y = \\
& = \sum_{X\in \mathcal X_G} \frac{n(G,\sigma_X)}{\sigma_X(1)}\sum_{Y\in\mathcal X_H} \langle \sigma_X,\Sind_H^G\sigma_Y^G \rangle\sigma_Y =
\sum_{Y\in\mathcal X_H} \left( \sum_{X\in \mathcal X_G} \frac{n(G,\sigma_X)}{\sigma_X(1)}\langle \sigma_X,\Sind_H^G\sigma_Y^G \rangle \right) \sigma_Y
\end{align*}
On the other hand, from $AHC1$ it follows that
$$ \sum_{X\in \mathcal X_G} \frac{n(G,\sigma_X)}{\sigma_X(1)}\langle \sigma_X,\Sind_H^G\sigma_Y^G \rangle  =  n\left( G, \sum_{X\in \mathcal X_G} \frac{1}{\sigma_X(1)} \langle \sigma_X,\Sind_H^G\sigma_Y^G \rangle \sigma_X \right) 
= n ( G, \Sind_H^G\sigma_Y^G). $$
\end{proof}

\begin{teor}(Uchida-van der Waall Theorem)\label{t3}

Let $H$ be a subgroup of $G$ such that
$\Sind_H^G 1_H = 1_G + \sum_{i\in I} \Sind_{H_i}^G \sigma_i,$
where $H_i$ are subgroups of $G$, $\sigma_i$ is a supercharacters of $H_i$ with linear constituents and $I$ is a finite set of indices.
Then $$ n(H,1_H) \geq  n(G,1_G). $$
\end{teor}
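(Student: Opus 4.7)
\medskip

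\noindent\textbf{Proof plan.} The strategy is to apply the arithmetic functional $n(G,\,\cdot\,)$ to both sides of the given decomposition and read off the inequality from the three axioms \emph{ACH1}, \emph{ACH2}, \emph{ACH3}. Since $1_G$ is (trivially) a supercharacter of $G$ and each $\Sind_{H_i}^G\sigma_i$ is a superclass function on $G$, one first needs to observe that the equality
\[
 \Sind_H^G 1_H \;=\; 1_G + \sum_{i\in I}\Sind_{H_i}^G \sigma_i
\]
is an identity of (virtual) supercharacters of $G$, so that all three terms live in the domain where the integers $n(G,\,\cdot\,)$ are defined (extended $\mathbb Z$-linearly via \emph{ACH1}).

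First, I would apply \emph{ACH1} to the right-hand side, obtaining
\[
 n\!\left(G,\Sind_H^G 1_H\right) \;=\; n(G,1_G) + \sum_{i\in I} n\!\left(G,\Sind_{H_i}^G \sigma_i\right).
\]
Next, \emph{ACH2} applied to the subgroup $H\leqslant G$ with supercharacter $1_H$ gives
$n(G,\Sind_H^G 1_H)=n(H,1_H)$, and applied to each pair $(H_i,\sigma_i)$ gives
$n(G,\Sind_{H_i}^G \sigma_i)=n(H_i,\sigma_i)$. (This uses that $\mathcal C$ is a compatible family, so superinduction from every $H_i$ to $G$ is defined.) Combining these yields
\[
 n(H,1_H) \;=\; n(G,1_G) + \sum_{i\in I} n(H_i,\sigma_i).
\]

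Finally, since each $\sigma_i$ is by hypothesis a supercharacter of $H_i$ with linear constituents, \emph{ACH3} gives $n(H_i,\sigma_i)\geq 0$ for every $i\in I$. Dropping the nonnegative sum yields the desired inequality $n(H,1_H)\geq n(G,1_G)$.

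There is no real obstacle: the argument is an almost purely formal computation once the axioms are in place. The only point that requires a moment of care is the extension of the integers $n(G,\,\cdot\,)$ from genuine supercharacters to virtual ones (needed because the right-hand side is written as a difference only implicitly, through the identity); this extension is forced by \emph{ACH1} and is unambiguous, so the steps above are justified.
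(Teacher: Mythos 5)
Your proposal is correct and follows exactly the paper's own argument: apply \emph{ACH1} to the hypothesized decomposition, use \emph{ACH2} to rewrite $n(G,\Sind_H^G 1_H)$ as $n(H,1_H)$ and each $n(G,\Sind_{H_i}^G\sigma_i)$ as $n(H_i,\sigma_i)$, then drop the nonnegative terms by \emph{ACH3}. Your extra remark about extending $n(G,\,\cdot\,)$ additively to sums of supercharacters is a reasonable point of care, but no new idea beyond the paper's proof.
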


\begin{proof}
From $ACH1$ and hypothesis it follows that
\begin{equation}\label{ec1}
 n(G,\Sind_H^G 1_H) = n(G,1_G) + \sum_{i\in I} n(G,\Sind_{H_i}^G \sigma_i) 
\end{equation}
From $ACH2$ and \eqref{ec1} it follows that
\begin{equation}\label{ec2}
 n(H,1_H) = n(G,1_G) + \sum_{i\in I}n(H_i,\sigma_i).
\end{equation}
The conclusion follows from \eqref{ec2} and $ACH3$.
\end{proof}

\begin{obs}\rm
If we consider the classical theory on all the subgroups of $G$, then the hypothesis of Theorem \ref{t3} is satisfied when $G$ is solvable,
according to \cite[Lemma 2.4]{murty}. 
\end{obs}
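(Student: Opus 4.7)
The plan is to translate the hypothesis of Theorem \ref{t3} into ordinary character-theoretic language once the compatible family $\mathcal{C}$ is chosen to be the classical supercharacter theories on all subgroups of $G$, and then to recognize the resulting statement as \cite[Lemma 2.4]{murty}.

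First I would verify that the classical theories on the subgroups of $G$ do form a compatible family in the sense of Definition 2.1. Given $H_1\subseteq H_2\leqslant G$ and $h\in H_1$, the superclass $\SCl_{H_i}(h)$ in the classical setting is simply the $H_i$-conjugacy class of $h$; since every $H_1$-conjugate of $h$ is trivially also an $H_2$-conjugate, one has $\SCl_{H_1}(h)\subseteq\SCl_{H_2}(h)$. Next I would record the relevant dictionary: in the classical theory on a subgroup $L$, the blocks of $\mathcal{X}_L$ are the singletons $\{\chi\}$ with $\chi\in\Irr(L)$, so the supercharacters are precisely the characters $\chi(1)\chi$; the trivial supercharacter is $1_L$, and a supercharacter with linear constituents, in the sense of ACH3, is simply a sum $\lambda_1+\cdots+\lambda_m$ of linear characters of $L$.

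I would then check that superinduction coincides with the classical Frobenius induction in this setting. For a class function $\Phi$ on $H$, the definition of $\Sind$ yields
$$\Sind_H^G\Phi(g)=\frac{|G|}{|H|\cdot|\SCl_G(g)|}\sum_{x\in\SCl_G(g)\cap H}\Phi(x),$$
whereas
$$\operatorname{Ind}_H^G\Phi(g)=\frac{1}{|H|}\sum_{\substack{x\in G\\ xgx^{-1}\in H}}\Phi(xgx^{-1}).$$
Grouping the $x\in G$ in the second sum according to the value $y=xgx^{-1}\in\SCl_G(g)\cap H$, each such $y$ is attained by exactly $|C_G(g)|=|G|/|\SCl_G(g)|$ elements $x$, and the two expressions agree.

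With these identifications, the hypothesis of Theorem \ref{t3} specialized to classical theories becomes the assertion that
$$\operatorname{Ind}_H^G 1_H = 1_G + \sum_{i\in I}\operatorname{Ind}_{H_i}^G \sigma_i,$$
where each $\sigma_i$ is a non-negative integral combination of linear characters of a subgroup $H_i$ of $G$. This is precisely the content of \cite[Lemma 2.4]{murty} whenever $G$ is solvable, and the remark follows. The only mildly delicate point is the identification $\Sind=\operatorname{Ind}$ in the classical case; once that is in place, everything else is routine translation between the two frameworks.
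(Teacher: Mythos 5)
Your proposal is correct and takes essentially the same route as the paper, whose entire ``proof'' of the remark is the citation of \cite[Lemma 2.4]{murty}: your dictionary --- classical superclasses are conjugacy classes (giving compatibility), classical supercharacters are $\chi(1)\chi$ so that supercharacters with linear constituents are exactly (sums of) linear characters, and $\Sind_H^G$ coincides with $\operatorname{Ind}_H^G$ via the centralizer count $|C_G(g)|=|G|/|\SCl_G(g)|$ --- is precisely the routine translation that citation presupposes. The only cosmetic point is that in the classical theory each single supercharacter with linear constituents is one linear character rather than a combination, but summing over $i\in I$ makes your formulation equivalent, so nothing is lost.
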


\end{document}